\newtheorem{theorem}{Theorem}[section]
\newtheorem{lemma}[theorem]{Lemma}
\newtheorem{corollary}[theorem]{Corollary}
\theoremstyle{definition}
\theoremstyle{remark}
\numberwithin{equation}{section}
\begin{document}

\title{Increasing stability for near field from the scattering amplitude}

\author{Victor Isakov}
\address{ Department of Mathematics, Statistics, and Physics, 
Wichita State University, Wichita, KS 67260-0033}

\email{victor.isakov@wichita.edu}
\thanks{This research was in part supported by the NSF grant DMS 10-08902 and by Emylou Keith and Betty 
Dutcher Distinguished Professorship at WSU}

\subjclass{ Primary 35R30; Secondary 35J05; 35P25; 
74J15}
\date{December 20, 2013.}

\keywords{Inverse Problems; Helmholtz equation; Scattering Theory; Wave Scattering}

\begin{abstract}
We obtain stability estimates for the near field of a radiating solution of the Helmholtz equation from the far field (scattering amplitude). This estimates contain a best possible Lipschitz term, a H\"older term, and terms which decay as powers
of the frequency $k$ for large $k$ under some a priori bounds. These estimates contain only explicit constants and show increasing stability of recovery of the  near field from scattering amplitude with growing $k$. Proofs are elementary and are based on new explicit bounds for Hankel functions. We give first applications to increasing stability in (linearized) inverse scattering by obstacles.  
\end{abstract}

\maketitle

\section{Introduction.}

Many inverse problems are known to be severely ill-posed. This  makes it extremely difficult to design reliable reconstruction algorithms  and dramatically restricts their resolution in practice. However, in some cases, it has been observed numerically that the stability increases with respect to some parameter such as the wave number (or energy). Ill-posedness occurs at the stage of the continuation of solutions of partial differential equations from observation set toward
an obstacle. Several rigorous justifications of the increasing stability phenomena in the Cauchy (or continuation) problem in different settings were obtained by Isakov {\it et al} \cite{HI, I07, IK, ASI10}. These justifications are in form of conditional stability estimates which are getting nearly Lipschitz when the wave number $k$ is getting large. 

 For increasing stability for the Schr\"odinger potential from the Dirichlet-to-Neumann map we refer to \cite{I11},  \cite{IN}, \cite{INUW}, and \cite{IW}. As a an important example of (at least generically and locally) well-posed inverse scattering problem we mention the inverse backscattering problem \cite{ER}.

We consider a solution $u$ to the Helmholtz equation
\begin{equation}
\label{H}
(\Delta + k^2)u=0\;\mbox{in}\;{\mathbb R}^3\setminus \bar D,
\end{equation}
satisfying  the Sommerfeld radiation condition
\begin{equation}
\label{Som}
 lim r(\partial_r u -iku)(x)  = 0 \;\mbox{as}\;r\rightarrow\infty.
  \end{equation}
  Here $r=|x|$. As well known \cite{CK}, \cite{LP},
  \cite{T} the relations \eqref{H}, \eqref{Som} imply that
  $$
  u(x)= \frac{e^{ikr}}{r} A(\sigma)+ O(r^{-2}),
  $$
  where $\sigma= r^{-1} x$ and $A(\sigma)$ is the so called scattering amplitude (or pattern). We are interested in recovery of $u$ from $A$. It is known that $A$ is a (real) analytic function on the unit sphere. Uniqueness of $u$ follows from well know Rellich Theorem. Our main goal is to study stability of this recovery.
  
  In this paper we will use denote by $B_R$ the ball $\{x: |x|<R\}$ in ${\mathbb R}^3$.
  
To state our results we use complete orthonormal base in $L^2(B_1)$
formed of spherical harmonics $Y_n^m(\sigma), n=0,1,2,...,
m=-n,...,0,...,n$. Let $A\in L^2(B_R)$ and $a_{m,n}$ be the coefficients of the expansion of $A$ with respect to $Y_n^m$, i.e., $A = \sum_{n,m} a_{m,n} Y_n^m$. For brevity, we introduce
$$
Y_n(\cdot ;A) = 
a_n^{-1}
\sum_{m=-n,...,n}a_{m,n}Y_n^m, 
a_n=(\sum_{m=-n,...,n} |a_{m,n}|^2)^{\frac{1}{2}}
$$  
then
\begin{equation}
\label{exp}
A(\sigma)= \sum_{n=0}^{\infty} a_n Y_n(\sigma;A).
\end{equation}
Observe that $\|A\|_{(0)}^2=\sum_{n=0}^{\infty} |a_n|^2$.
As known \cite{CK}, \cite{T},
\begin{equation}
\label{expu}
u(x)= \sum_{n=0}^{\infty} u_n(r) Y_n(\sigma;A),\;
\mbox{where}\;
u_n(r)  = ki a_n h_n^{(1)}(kr).
\end{equation}

For a function $u$ with the expansion \eqref{expu} we will use the following (natural) Sobolev norm
\begin{equation}
\label{norm}
\| u\|^2_{(l)}(\partial B_R)= R^2 sum_{m=0}^l \sum_{n=0}^{\infty} (\frac{n}{R})^{2m} |u_n(R)|^2.
\end{equation}

We let $\varepsilon_1^2=\sum_{n=0}^N |a_n|^2, \varepsilon_2^2=\sum_{n=N+1}^{\infty} |a_n|^2$.

Now we state our main results where we
chosen $N=[\sqrt{kR}]$  and  
$E=-log \varepsilon_2$. Here $[a]$ is the entire part of $a$.

\begin{theorem}
\label{Th1}
Assume that $2\leq kR$.
 
Then we have the following stability estimates 
\begin{equation}
\label{T1}
\lVert u \rVert^2_{(0)}(\partial B_R)
\leq 
\frac{2 e^2}{\pi} \varepsilon_1^2 
+ \frac{2}{\pi}e^{\frac{2}{R}} \varepsilon_2+ 
 R^2\frac{M_1^2}{E+k}, 
\end{equation}
and
\begin{equation}
\label{T2}
\lVert u \rVert^2_{(0)}(\partial B_R)
\leq 
\frac{2 e^2}{\pi}  \varepsilon_1^2 
+ \sqrt{\frac{2R}{\pi k}}e^{\frac{1}{R}} M_1 \varepsilon_2^{\frac{1}{2}}+ 
R^2\frac{M_1^2}{E+k} , 
\end{equation}
where $M_1=\|u\|_{(1)}(\partial B_R)$. 
\end{theorem}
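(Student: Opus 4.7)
The proof rests on Parseval's identity applied to the spherical harmonic expansion,
\[
\|u\|_{(0)}^2(\partial B_R) = R^2\sum_{n=0}^\infty |u_n(R)|^2 = R^2 k^2 \sum_{n=0}^\infty |a_n|^2 |h_n^{(1)}(kR)|^2 ,
\]
and on splitting the sum into three ranges of $n$, applying a different upper bound on $|h_n^{(1)}(kR)|$ in each. Besides the given cutoff $N=[\sqrt{kR}\,]$, I would introduce a second cutoff of order $\sqrt{E+k}$, concretely $N_1 = \lceil\sqrt{E+k}\,\rceil$, so that $N_1^2 \geq E+k$.

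In the low range $n \leq N$ the argument $kR$ dominates $n^2$ and the Hankel function is near its large-argument asymptotic, so an explicit estimate of the form $R^2 k^2 |h_n^{(1)}(kR)|^2 \leq 2e^2/\pi$ applies; summing against $|a_n|^2$ produces the Lipschitz term $\frac{2e^2}{\pi}\varepsilon_1^2$ in both (T1) and (T2). In the high range $n > N_1$ the Hankel function grows too fast to be controlled from the scattering data, but the a priori Sobolev bound gives
\[
R^2 \sum_{n>N_1} |u_n(R)|^2 \leq \frac{R^2}{N_1^2}\sum_{n} n^2 |u_n(R)|^2 \leq \frac{R^2 M_1^2}{N_1^2} \leq \frac{R^2 M_1^2}{E+k},
\]
which accounts for the last term of both estimates.

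The main work is in the intermediate range $N<n\leq N_1$, where $|h_n^{(1)}(kR)|$ grows but its size is still controlled in terms of the a priori quantity $E=-\log\varepsilon_2$. For (T1) I would invoke an explicit Hankel estimate of the form $R^2 k^2 |h_n^{(1)}(kR)|^2 \leq \frac{2}{\pi}\,e^{2/R}\, e^E$ valid throughout this range, multiply by $|a_n|^2$, sum, and use $\sum_{n>N}|a_n|^2 \leq \varepsilon_2^2$ together with the algebraic identity $e^E \varepsilon_2^2 = \varepsilon_2$ to produce the middle term $\frac{2}{\pi} e^{2/R}\varepsilon_2$. For (T2) I would instead apply Cauchy--Schwarz splitting off one factor of $|a_n|$:
\begin{align*}
\sum_{N<n\leq N_1}\!\! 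R^2 k^2 |a_n|^2 |h_n^{(1)}(kR)|^2
&\leq \Bigl(\sum_{n>N}|a_n|^2\Bigr)^{1/2}\!\Bigl(\sum_{N<n\leq N_1} R^4 k^4 |a_n|^2 |h_n^{(1)}(kR)|^4 \Bigr)^{1/2} \\
&\leq \varepsilon_2 \, M_1 \, \bigl(\max_{N<n\leq N_1} R^2 k^2 |h_n^{(1)}(kR)|^2\bigr)^{1/2},
\end{align*}
where the second factor was handled by factoring $R^4 k^4 |a_n|^2 |h_n^{(1)}|^4 = (R^2|u_n|^2)(R^2 k^2 |h_n^{(1)}|^2)$ and using $\sum_n R^2 |u_n(R)|^2 \leq M_1^2$. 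Feeding in the appropriate explicit bound $R^2 k^2 |h_n^{(1)}(kR)|^2 \leq \frac{2R}{\pi k}\, e^{2/R}\, e^E$ for this range and using $\varepsilon_2 \, e^{E/2} = \varepsilon_2^{1/2}$ then produces the middle term of (T2).

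The main obstacle is establishing the two explicit intermediate-range Hankel function bounds with the exact constants $\frac{2}{\pi}e^{2/R}e^E$ and $\frac{2R}{\pi k}e^{2/R}e^E$ — these are precisely the "new explicit bounds for Hankel functions" announced in the abstract, and they must be sharp enough to remain valid for all $n$ up to $N_1 \sim \sqrt{E+k}$. Once they are in hand, the three-range split above immediately assembles (T1) and (T2) by adding the three contributions.
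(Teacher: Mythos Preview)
Your plan for \eqref{T1} is essentially the paper's: the same Parseval identity, the same three-range split at $N=[\sqrt{kR}]$ and $N_1\sim\sqrt{E+k}$, Lemma~2.1 on the low range, Lemma~2.2 on the middle range (giving $R^2k^2|h_n^{(1)}(kR)|^2\le\frac{2}{\pi}(1+\tfrac{n}{kR})^{2n}\le\frac{2}{\pi}e^{2/R}e^{E}$ for $n\le N_1$), and the $H^1$ bound on the tail. One omission: you must also treat the case $N_1\le N$, where the middle range is empty and the two-range split already gives \eqref{T1} and \eqref{T2} directly. The paper uses the floor $N_1=[\sqrt{E+k}]$ rather than the ceiling, which makes the inequality $N_1^2\le E+k$ (needed for $2N_1^2/(kR)\le E+2/R$) clean; with the ceiling you would have to adjust constants.

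For \eqref{T2} there is a genuine gap. Your Cauchy--Schwarz splits off a bare factor $|a_n|$ and then requires the pointwise bound $R^2k^2|h_n^{(1)}(kR)|^2\le\frac{2R}{\pi k}e^{2/R}e^{E}$ on the middle range. That bound is \emph{false}: at the bottom of the range, $n=N+1\approx\sqrt{kR}$, Lemma~2.1 gives $R^2k^2|h_n^{(1)}(kR)|^2$ of order $2e^2/\pi$, which for small $E$ and large $k$ is far larger than $\frac{2R}{\pi k}e^{2/R}e^{E}$. No sharper Hankel estimate with an extra $1/(kR)$ is available here; Lemma~2.2 only yields the constant $\frac{2}{\pi}e^{2/R}e^{E}$, and your splitting would then produce $\sqrt{2/\pi}\,e^{1/R}M_1\varepsilon_2^{1/2}$, missing the crucial decaying factor $\sqrt{R/k}$.

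The paper recovers that factor not from a Hankel bound but from the \emph{weighting} in Cauchy--Schwarz. It writes
\[
k^2R^2|a_n|^2|h_n^{(1)}(kR)|^2=\Bigl(\tfrac{kR}{n}|a_n||h_n^{(1)}(kR)|\Bigr)\Bigl(kRn|a_n||h_n^{(1)}(kR)|\Bigr),
\]
so that after Cauchy--Schwarz the second factor assembles exactly into the weighted norm $\bigl(\sum k^2 n^2|a_n|^2|h_n^{(1)}|^2\bigr)^{1/2}\le M_1/R$, while the first factor carries a $1/n\le 1/(N+1)\le 1/\sqrt{kR}$ in addition to the Lemma~2.2 bound. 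That $1/\sqrt{kR}$ is precisely the source of the $\sqrt{R/k}$ in \eqref{T2}.
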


\begin{theorem}
\label{Th2}
Assume that $2\leq kR$.
 
Then we have the following stability estimate: 
$$
\lVert \partial_r u \rVert^2_{(0)}(\partial B_R)
\leq 
$$
\begin{equation}
\label{T1der}
\frac{ e^2}{\pi}(3+\sqrt{5}) k^2 \varepsilon_1^2 
+  k^2 e^{\frac{2}{R}}\varepsilon_2+ 
R^2\frac{M_2^2}{E+k-2\sqrt{E+k}+1} , 
\end{equation}
where  $M_2=\|\partial_r u\|_{(1)}(\partial B_R)$. 
\end{theorem}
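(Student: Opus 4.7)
\emph{Plan.} The starting point is the spherical-harmonic expansion \eqref{expu}. Differentiating term by term in $r$ yields
$\partial_r u(x) = \sum_{n=0}^{\infty} u_n'(r)\, Y_n(\sigma;A)$ with $u_n'(r) = k^2 i\,a_n (h_n^{(1)})'(kr)$, so by \eqref{norm}
$$
\lVert\partial_r u\rVert^2_{(0)}(\partial B_R) = R^2 k^4 \sum_{n=0}^{\infty} |a_n|^2 \bigl|(h_n^{(1)})'(kR)\bigr|^2.
$$
I would split this series at the two thresholds $N = [\sqrt{kR}]$ and $N^\ast = [\sqrt{E+k}]$, exactly as in the proof of Theorem \ref{Th1}, and estimate the three resulting pieces separately.

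For the low modes $n \le N$ the basic tool is the recurrence identity $(h_n^{(1)})'(x) = h_{n-1}^{(1)}(x) - \tfrac{n+1}{x} h_n^{(1)}(x)$, which reduces the derivative estimate to bounds on $|h_{n-1}^{(1)}(kR)|$ and $|h_n^{(1)}(kR)|$ that the paper has already established for this range (and that power the first term of \eqref{T1}). Applying Young's inequality with a free parameter $t > 0$, using the elementary estimate $(n+1)^2/(kR)^2 \le (1 + 1/\sqrt{kR})^2/(kR)$ that follows from $n \le \sqrt{kR}$ and the hypothesis $kR \ge 2$, and finally optimising over $t$, one arrives at the sharp coefficient $e^2(3+\sqrt 5)/\pi$ multiplying $k^2 \varepsilon_1^2$; the extra $k^2$ is simply the effect of the two $r$-differentiations hidden in $|u_n'(R)|^2$.

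For the moderate range $N < n \le N^\ast$ I would re-apply the same recurrence on top of the Hankel estimates that produced the second term of \eqref{T1}. Each $r$-differentiation brings down a factor of $k$, accounting for the $k^2$ in front of $\varepsilon_2$ in \eqref{T1der}, while the exponential factor $e^{2/R}$ survives from the same estimate on small arguments used in Theorem \ref{Th1}. The high-mode tail $n > N^\ast$ is handled by Sobolev interpolation: since $\sum_{n\ge 0} n^2 |u_n'(R)|^2 \le M_2^2$ by the definition of $\lVert\cdot\rVert_{(1)}$, one has
$$
R^2 \sum_{n > N^\ast} |u_n'(R)|^2 \;\le\; \frac{R^2}{(N^\ast)^2} \sum_{n \ge 0} n^2 |u_n'(R)|^2 \;\le\; \frac{R^2 M_2^2}{(N^\ast)^2},
$$
and the elementary bound $N^\ast \ge \sqrt{E+k} - 1$ produces the third term $R^2 M_2^2/(E + k - 2\sqrt{E+k} + 1)$ of \eqref{T1der}. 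The main obstacle is the low-mode calculation: squeezing the sharp constant $3 + \sqrt 5$ out of the Hankel recurrence requires the correctly optimised Young parameter, whereas the moderate-range and tail estimates follow the Theorem \ref{Th1} template essentially verbatim, up to the visible $k^2$ factor coming from the $r$-derivative.
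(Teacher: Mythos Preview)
Your overall architecture --- split at $N=[\sqrt{kR}]$ and near $[\sqrt{E+k}]$, with the tail absorbed by $M_2$ --- matches the paper. The essential difference is the tool for bounding $|(h_n^{(1)})'(kR)|$. The paper does \emph{not} go through the recurrence $(h_n^{(1)})'=h_{n-1}^{(1)}-\tfrac{n+1}{x}h_n^{(1)}$; it uses Lemmas~\ref{lowder} and~\ref{globalder}, obtained by differentiating the explicit representation \eqref{h} and bounding termwise. For the low modes Lemma~\ref{lowder} gives $|\partial_t h_n^{(1)}(t)|^2<\tfrac{2e^2}{\pi t^4}(\sqrt{t^2+1}+1)^2$, and the coefficient $\tfrac{e^2}{\pi}(3+\sqrt5)$ drops out of the elementary inequality $\tfrac{(\sqrt{t^2+1}+1)^2}{t^2}\le\tfrac{(\sqrt5+1)^2}{4}=\tfrac{3+\sqrt5}{2}$ for $t\ge 2$. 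Your recurrence together with Lemma~\ref{low} yields only $|\partial_t h_n^{(1)}(kR)|^2\le\tfrac{2e^2}{\pi(kR)^2}\bigl(1+\tfrac{n+1}{kR}\bigr)^2$, and at $kR=2$, $n=N=1$ this is $\tfrac{8e^2}{\pi(kR)^2}$, strictly larger than $\tfrac{e^2(3+\sqrt5)}{\pi(kR)^2}$. Optimising Young's inequality cannot repair this: the minimum of $(1+t)a^2+(1+1/t)b^2$ over $t>0$ is exactly $(|a|+|b|)^2$, which is the triangle inequality you already used. So the recurrence route proves a bound of the right shape but with a worse constant; the specific $3+\sqrt5$ is not attainable that way.

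For the middle range the paper uses Lemma~\ref{globalder} together with $\tfrac{\sqrt{t^2+1}}{t}\le\tfrac{\sqrt5}{2}$ (for $t\ge 2$) to get $|\partial_t h_n^{(1)}(kR)|^2\le(kR)^{-2}\bigl(1+\tfrac{n}{kR}\bigr)^{2(n+1)}$. The exponent $n+1$ (versus $n$ in Lemma~\ref{global}) forces the second threshold to be $N_1=[\sqrt{E+k}]-1$ rather than $[\sqrt{E+k}]$: one needs $N_1(N_1+1)\le E+k$ for the analogue of \eqref{N1} to close, and this shift is exactly what places $(\sqrt{E+k}-1)^2=E+k-2\sqrt{E+k}+1$ in the denominator of the third term of \eqref{T1der}. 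With your choice $N^\ast=[\sqrt{E+k}]$ the middle estimate would not close without an additional loss.
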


From estimates \eqref{T1}, \eqref{T2} it is obvious that the stability behaves more like Lipschitz type when $k$ is large.
Indeed, the second and third terms on the right side \eqref{T2}  go to zero as powers of $k$, which quantifies the increasing stability. We observe that the bounds \eqref{T1}, \eqref{T2}, \eqref{T1der} are so called conditional stability estimates: they guarantee stability under a priori constrains of higher norms of solutions. Due to
ill-posedness of recovery of $u$ from $A$ (\cite{B},cite{CK},\cite{I}, \cite{T}) stability estimates are impossible without such constraints.

Known stability estimates for $u$ from its scattering amplitude \cite{B}, \cite{I}, \cite{T} are of logarithmic type, contain unknown constants, and do not indicate increasing stability for larger $k$.    

Our proofs are using well known expression \eqref{expu} of $u$ via the expansion \eqref{exp} of the scattering amplitude as the series in spherical harmonics. The crucial step is explicit upper bounds for the Hankel functions $h_n^{(1)}(t)$ given by Lemmas 2.1, 2.3 with surprisingly short and elementary proofs. Theory of Bessel and Hankel functions abound with basic, but hard open questions (about sharp maxima, zeros etc) \cite{W}. While some bounds (similar to  Lemma 2.2) and asymptotic behaviour of these functions are well known, explicit bounds when $0<t<n$ are only partially available and constants in these bounds are not explicit \cite{BRV}, Lemma 1, p. 364. Some refined properties of Bessel functions were used by
F. John \cite{J} to find a crucial example showing growing instability for the continuation of solution to the Helmholtz equation from the unit disk onto its complement in the plane. In \cite{IK}
by using energy integrals for the Bessel's equation
we demonstrated increasing stability of the continuation for the John's example in low frequency zone (which grows with $k$).

The paper is organized as follows. In Section~2 we will obtain some explicit bounds on Hankel functions $h_n^{(1)}$. 
 In Section~3 we present proofs of Theorem~\ref{Th1} and
 of  Theorem~\ref{Th2}. In Section~4 we give applications
 to the increasing stability of inverse obstacle scattering problem linearized around a sphere. Finally we 
discuss challenging open problems and possible further developments.

\section{Some bounds of Hankel functions}

We will use that
\begin{equation}
\label{h}
h_n^{(1)}(t) = \sqrt{\frac{2}{\pi}}i^n \frac{e^{i t}}{t}
\sum_{m=0}^n(-1)^n\frac{(n+m)!}{m! (n-m)!}\frac{i^m}{(2t)^m},
\end{equation}
provided $0<t$.
This is a well-known representation of the Hankel function
given for example in \cite{JEL}, p. 142, \cite{T}, p. 205, \cite{W}, p. 53.

To prove main results, we need  elementary but crucial lemmas.  

\begin{lemma}
\label{low}
If $n^2<t$, then 
\begin{equation}
\label{lowb}
|h_n^{(1)}(t)|<\frac{\sqrt{2}e}{\sqrt{\pi}t}.
\end{equation}
 \end{lemma}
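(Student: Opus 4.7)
The plan is to start from the explicit representation \eqref{h}, take absolute values, and show that the resulting finite sum is bounded by $e$ under the hypothesis $n^2 < t$. Since $|e^{it}|=|i^n|=|i^m|=1$, the triangle inequality immediately gives
\begin{equation*}
|h_n^{(1)}(t)| \leq \sqrt{\frac{2}{\pi}}\,\frac{1}{t}\sum_{m=0}^n a_m, \qquad a_m := \frac{(n+m)!}{m!\,(n-m)!\,(2t)^m},
\end{equation*}
so it suffices to prove $\sum_{m=0}^n a_m < e$.

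The key step I would take is a ratio test on $a_m$. A direct computation gives
\begin{equation*}
\frac{a_{m+1}}{a_m} = \frac{(n+m+1)(n-m)}{(m+1)(2t)} = \frac{n(n+1)-m(m+1)}{(m+1)(2t)},
\end{equation*}
and the numerator is at most $n(n+1)$. Under the hypothesis $n^2 < t$ (with $n\geq 1$; the case $n=0$ is trivial since the sum reduces to $a_0=1<e$), we have $2t > 2n^2 \geq n(n+1)$, so the ratio is strictly less than $1/(m+1)$. Combined with $a_0 = 1$, an easy induction then yields $a_m \leq 1/m!$ for all $m$, with strict inequality once $m\geq 1$.

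Summing this over $0\leq m\leq n$ gives
\begin{equation*}
\sum_{m=0}^n a_m < \sum_{m=0}^\infty \frac{1}{m!} = e,
\end{equation*}
which, combined with the triangle-inequality bound above, yields \eqref{lowb}.

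The approach is essentially mechanical once the right comparison $a_m \leq 1/m!$ is identified; I expect the only point requiring care to be the algebraic identity $(n+m+1)(n-m)=n(n+1)-m(m+1)$, which is exactly what converts the hypothesis $n^2<t$ into the clean ratio bound. No deeper properties of Bessel or Hankel functions (asymptotics, integral representations, zeros) are needed, which matches the author's remark that the proof is ``surprisingly short and elementary.''
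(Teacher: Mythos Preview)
Your proof is correct and follows essentially the same approach as the paper: apply the triangle inequality to the explicit representation \eqref{h}, show that each coefficient $a_m$ is bounded by $1/m!$, and sum to $e$. The only cosmetic difference is that the paper establishes $a_m\le 1/m!$ by directly factoring $(n+m)!/(n-m)! = (n-m+1)\cdots n\cdot(n+1)\cdots(n+m)\le n^m(2n)^m$ and using $(2t)^m>(2n^2)^m$, whereas you reach the same termwise bound via the ratio $a_{m+1}/a_m<1/(m+1)$.
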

\begin{proof}
Using \eqref{h} and the triangle inequality we yield
$$
|h_n^{(1)}(t)|   \leq 
 \frac{\sqrt{2}}{\sqrt{\pi} t}
\sum_{m=0}^n \frac{(n+m)!}{m! (n-m)!}\frac{1}{(2t)^m} =
$$
$$
\frac{\sqrt{2}}{\sqrt{\pi} t}
(1+\sum_{m=1}^n \frac{(n-m)!}{(n-m)!} \frac{1}{m!}
\frac{(n-m+1)...n(n+1)...(n+m)}{n^m (2n)^m}),
$$
where we used the assumption that $n^2<t$.  Since
$m \leq n$, we have $(n-m+1)...n\leq n^m$ and
$(n+1)...(n+m)\leq (2n)^m$, so continuing the bounds of
$|h_n^{(1)}|$ we obtain
$$
|h_n^{(1)}(t)| \leq  \frac{\sqrt{2}}{\sqrt{\pi} t}
\sum_{m=0}^n \frac{1}{m!} \leq 
 \frac{\sqrt{2} e}{\sqrt{\pi}t}.
$$
\end{proof}

\begin{lemma}
\label{global} 
If $0<t$, then
\begin{equation}
\label{globalb}
|h_n^{(1)}(t)| < \frac{\sqrt{2}}{\sqrt{\pi}t}
(1+\frac{n}{t})^n.
\end{equation}
 \end{lemma}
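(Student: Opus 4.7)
The plan is to mimic the argument of Lemma~\ref{low}, starting from the explicit series \eqref{h} and the triangle inequality, but replacing the assumption $n^2 < t$ (which made the factors $(n-m+1)\cdots n \le n^m$ and $(n+1)\cdots(n+m) \le (2n)^m$ collapse nicely with $\frac{1}{(2t)^m}$) by a weaker bound that holds for every $t>0$. Concretely, I first write
$$
|h_n^{(1)}(t)| \;\leq\; \frac{\sqrt{2}}{\sqrt{\pi}\,t}\sum_{m=0}^{n}\frac{(n+m)!}{m!\,(n-m)!}\,\frac{1}{(2t)^m},
$$
exactly as in the proof of Lemma~\ref{low}, and then aim to recognize the sum on the right as dominated by the binomial expansion of $(1+n/t)^n$.

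For the main step I rewrite $(n+m)! = n!\,(n+1)(n+2)\cdots(n+m)$ and use the elementary inequality $(n+j)\le 2n$ for $1\le j\le m\le n$, so that $(n+1)\cdots(n+m)\le (2n)^m$. This gives
$$
\frac{(n+m)!}{m!\,(n-m)!}\,\frac{1}{(2t)^m} \;\leq\; \binom{n}{m}\,\frac{(2n)^m}{(2t)^m} \;=\; \binom{n}{m}\,\frac{n^m}{t^m}.
$$
Summing over $m=0,\ldots,n$ and invoking the binomial theorem yields
$$
\sum_{m=0}^{n}\frac{(n+m)!}{m!\,(n-m)!}\,\frac{1}{(2t)^m} \;\leq\; \sum_{m=0}^{n}\binom{n}{m}\Bigl(\frac{n}{t}\Bigr)^m \;=\; \Bigl(1+\frac{n}{t}\Bigr)^n,
$$
which combined with the first display produces \eqref{globalb}. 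The strict inequality comes from the fact that $(n+1)<2n$ for $n\ge 2$ (and the cases $n=0,1$ are checked directly from \eqref{h}).

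I do not expect a real obstacle here: unlike Lemma~\ref{low}, we are not trying to exploit the constraint $n^2<t$, so we lose nothing by using the crude uniform bound $n+j\le 2n$. The only mildly delicate point is keeping the inequality strict, and that is handled by the small-$n$ sanity check together with the observation that the bound $(n+1)\cdots(n+m)\le(2n)^m$ is strict whenever $m\ge 1$ and $n\ge 2$.
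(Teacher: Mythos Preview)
Your argument is essentially identical to the paper's: factor $(n+m)! = n!\,(n+1)\cdots(n+m)$, bound $(n+1)\cdots(n+m)\le (2n)^m$, and sum via the binomial theorem. The only extra care you take concerns the strict inequality, which the paper's own proof in fact leaves as $\le$ (and equality actually holds at $n=0$, so the strict sign in the lemma statement is a minor slip rather than something your small-$n$ check can salvage).
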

\begin{proof}
Again using \eqref{h} and the triangle inequality we yield
$$
|h_n^{(1)}(t)|   \leq 
 \frac{\sqrt{2}}{\sqrt{\pi} t}
\sum_{m=0}^n \frac{n!}{m! (n-m)!}(n+1)...(n+m)
\frac{1}{(2t)^m} \leq
$$
$$
\frac{\sqrt{2}}{\sqrt{\pi} t}
\sum_{m=0}^n \frac{n!}{m!(n-m)!} (\frac{n}{t})^m \leq
\frac{\sqrt{2}}{\sqrt{\pi} t} (1+ \frac{n}{t})^n ,
$$
due to the binomial formula.

\end{proof}

Now we similarly obtain bounds for derivatives of the Hankel functions.

\begin{lemma}
\label{lowder}
If $n^2<t$, then 
\begin{equation}
\label{lowb'}
|\partial_t h_n^{(1)}(t)|<\frac{\sqrt{2}e}{\sqrt{\pi}} \frac{\sqrt{t^2+1}+1}{t^2}.
\end{equation}
 \end{lemma}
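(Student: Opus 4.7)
The plan is to imitate the proof of Lemma~\ref{low} with the only extra ingredient being a direct differentiation of the explicit series \eqref{h}. Writing \eqref{h} as
$$
h_n^{(1)}(t) = \sqrt{\frac{2}{\pi}}\,i^n\sum_{m=0}^n(-1)^n\frac{(n+m)!}{m!(n-m)!}\frac{i^m}{2^m}\cdot\frac{e^{it}}{t^{m+1}},
$$
I would differentiate term by term, using $\partial_t(e^{it}/t^{m+1}) = e^{it}(i - (m+1)/t)/t^{m+1}$, whose modulus is bounded above by $\sqrt{t^2+(m+1)^2}/t^{m+2}$.

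After applying the triangle inequality, the task reduces to estimating
$$
|\partial_t h_n^{(1)}(t)|\leq\frac{\sqrt{2}}{\sqrt{\pi}\,t^2}\sum_{m=0}^n\frac{(n+m)!}{m!(n-m)!}\frac{\sqrt{t^2+(m+1)^2}}{(2t)^m}.
$$
Now I reuse verbatim the factorial estimate from Lemma~\ref{low}: since $n^2<t$, one has $(n-m+1)\cdots n\leq n^m$ and $(n+1)\cdots(n+m)\leq(2n)^m$, so $(n+m)!/((n-m)!(2t)^m)\leq(2n^2/(2t))^m\leq 1$. This collapses the bound to
$$
|\partial_t h_n^{(1)}(t)|\leq\frac{\sqrt{2}}{\sqrt{\pi}\,t^2}\sum_{m=0}^n\frac{\sqrt{t^2+(m+1)^2}}{m!}.
$$

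The key elementary inequality I would invoke is
$$
\sqrt{t^2+(m+1)^2}\leq\sqrt{t^2+1}+m,
$$
which follows by squaring and observing that the difference of squares equals $2m(\sqrt{t^2+1}-1)\geq 0$. Splitting the sum accordingly and using the two identities $\sum_{m=0}^\infty 1/m! = e$ and $\sum_{m=0}^\infty m/m! = e$ to majorize the finite sum by $e(\sqrt{t^2+1}+1)$ produces exactly the bound \eqref{lowb'}.

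I do not anticipate a serious obstacle: the factorial arithmetic is identical to Lemma~\ref{low}, and the additional factor coming from differentiation is absorbed cleanly by the auxiliary inequality $\sqrt{t^2+(m+1)^2}\leq\sqrt{t^2+1}+m$. The only point requiring attention is keeping track of the extra power of $t$ in the denominator produced by differentiating $t^{-(m+1)}$, which is precisely what upgrades the $t^{-1}$ of Lemma~\ref{low} to the $t^{-2}$ appearing on the right of \eqref{lowb'}; the factor $\sqrt{t^2+1}$ is in turn the contribution of the $i$ coming from $\partial_t e^{it}$.
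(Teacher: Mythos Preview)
Your argument is correct and is essentially the paper's own proof, reorganized. The paper differentiates \eqref{h} via the product rule as $\partial_t\bigl(\tfrac{e^{it}}{t}\,S(t)\bigr)$, obtaining two sums: one carrying the factor $(it-1)/t$ (from $\partial_t(e^{it}/t)$) and one carrying $-m/t$ (from $\partial_t(2t)^{-m}$), and bounds each by $e$ exactly as in Lemma~\ref{low}, using $\sum 1/m!\le e$ for the first and $\sum 1/(m-1)!\le e$ for the second. Your single-sum version with the factor $|i-(m+1)/t|=\sqrt{t^2+(m+1)^2}/t$ is the same computation; your auxiliary inequality $\sqrt{t^2+(m+1)^2}\le\sqrt{t^2+1}+m$ is nothing other than the triangle inequality $|it-(m+1)|\le|it-1|+m$ that the paper uses implicitly by splitting, and your identity $\sum m/m!=e$ is the paper's $\sum 1/(m-1)!\le e$. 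So the two arguments coincide term by term.
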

\begin{proof}
Differentiating \eqref{h}  we yield
$$
\partial_t h_n^{(1)}(t)= 
\frac{\sqrt{2}}{\sqrt{\pi} } \frac{e^{it}}{t} 
 (\frac{it-1}{t}
\sum_{m=0}^n (-1)^n\frac{(n+m)!}{m! (n-m)!}
\frac{i^m}{(2t)^m} -
$$
\begin{equation}
\label{h'}
\sum_{m=1}^n (-1)^n\frac{(n+m)!}{m!(n-m)!} 
\frac{i^m}{(2t)^m}\frac{m}{t}).
\end{equation}

Since $|it-1|=\sqrt{t^2+1}$ as in the proof of Lemma 2.1 we obtain
\begin{equation}
\label{first}
|\frac{it-1}{t}\sum_{m=0}^n (-1)^n
\frac{(n+m)!}{m! (n-m)!}\frac{i^m}{(2t)^m}|
   \leq \frac{\sqrt{t^2+1}}{t} e.
\end{equation}
 For the second sum on the right side of \eqref{h'} we have
   $$
   |\sum_{m=1}^n (-1)^n\frac{(n+m)!}{m!(n-m)!} 
\frac{i^m}{(2t)^m}\frac{m}{t}| \leq
$$
\begin{equation}
\label{second}
\sum_{m=1}^n \frac{(n-m)!}{(n-m)!} \frac{1}{(m-1)!}
\frac{(n-m+1)...n(n+1)...(n+m)}{n^m (2n)^m}\frac{1}{t} \leq \frac{e}{t},
\end{equation}
where we used the assumption that $n^2<t$ and again followed the argument in Lemma 2.1. 

From \eqref{h'} by the triangle inequality with use of  \eqref{first}, and \eqref{second} we yield
$$
|\partial_t h_n^{(1)}(t)| \leq
\frac{\sqrt{2}}{\sqrt{\pi}}(\frac{\sqrt{t^2+1}}{t^2} e+ \frac{1}{t^2} e)
$$
and complete the proof of \eqref{lowder}.

\end{proof}

\begin{lemma}
\label{globalder} 
If $0<t$, then
\begin{equation}
\label{globalb'}
|h_n^{(1)}(t)| \leq \frac{\sqrt{2}}{\sqrt{\pi}t}
(\frac{\sqrt{t^2+1}}{t}+\frac{n}{t})
(1+\frac{n}{t})^n.
\end{equation}
 \end{lemma}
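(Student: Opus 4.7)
The statement (given the label \texttt{globalder} and its position after Lemma~\ref{lowder}) is the global analogue of Lemma~\ref{lowder}, so the left-hand side should be read as $|\partial_t h_n^{(1)}(t)|$ rather than $|h_n^{(1)}(t)|$. My plan is to mirror the proof of Lemma~\ref{global}, but starting from the differentiated formula \eqref{h'} instead of \eqref{h}, and to split into the two natural pieces coming from \eqref{h'}.

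First I would apply the triangle inequality to \eqref{h'}, producing two sums. For the first sum, the one multiplied by $(it-1)/t$, I simply repeat the binomial estimate from Lemma~\ref{global}: bound $(n+m)! / n! = (n+1)\cdots(n+m) \leq (2n)^m$, so the sum is majorised by $\sum_{m=0}^n \binom{n}{m}(n/t)^m = (1+n/t)^n$. Using $|it-1| = \sqrt{t^2+1}$, this first contribution is at most $\frac{\sqrt{t^2+1}}{t}(1+n/t)^n$, which matches the first term in the claimed bound.

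For the second sum, the key step is handling the extra factor $m/t$. After the same factorial estimate, one must control
$$\frac{1}{t}\sum_{m=1}^n m\binom{n}{m}\Bigl(\frac{n}{t}\Bigr)^m.$$
Using the classical identity $m\binom{n}{m}=n\binom{n-1}{m-1}$ and reindexing $k=m-1$, this becomes $\frac{n^2}{t^2}\sum_{k=0}^{n-1}\binom{n-1}{k}(n/t)^k = \frac{n^2}{t^2}(1+n/t)^{n-1}$. The trivial inequality $\frac{n}{t}\leq 1+\frac{n}{t}$ then gives the clean bound $\frac{n}{t}(1+n/t)^n$, which is exactly the second term in the claimed estimate.

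Combining both contributions and including the prefactor $\frac{\sqrt{2}}{\sqrt{\pi}\,t}$ (from $\sqrt{2/\pi}\,e^{it}/t$ in \eqref{h'}), one obtains the stated inequality. I do not expect any real obstacle here: the only mild subtlety is recognising that the derivative's extra combinatorial factor $m$ can be absorbed by the identity $m\binom{n}{m}=n\binom{n-1}{m-1}$, which conveniently produces a shifted binomial sum that is dominated by $(1+n/t)^n$ up to a factor of $n/t$.
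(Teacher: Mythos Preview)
Your proposal is correct and follows essentially the same route as the paper: differentiate \eqref{h}, split \eqref{h'} into the two sums, bound the first by $\frac{\sqrt{t^2+1}}{t}(1+n/t)^n$ via the argument of Lemma~\ref{global}, and bound the second by $\frac{n}{t}(1+n/t)^n$. The only difference is how the extra factor $m$ is absorbed in the second sum: the paper simply uses the crude inequality $m\leq n$ to pull out an $n$ and then applies the binomial bound, whereas you use the identity $m\binom{n}{m}=n\binom{n-1}{m-1}$ to get $\frac{n^2}{t^2}(1+n/t)^{n-1}$ and then $n/t\leq 1+n/t$. Both yield exactly the same final bound, so this is a cosmetic distinction rather than a different argument.
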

 
\begin{proof}

As in the proof of Lemma 2.3 we will bound two terms in \eqref{h'}. We have
\begin{equation}
\label{firstglobal}
|\frac{it-1}{t}\sum_{m=0}^n (-1)^n
\frac{(n+m)!}{m! (n-m)!}\frac{i^m}{(2t)^m}|
   \leq \frac{\sqrt{t^2+1}}{t} (1+\frac{n}{t})^n,
\end{equation}
by repeating the proof of Lemma 2.2.

 For the second sum on the right side of \eqref{h'} by the triangle inequality we have
   $$
   |\sum_{m=1}^n (-1)^n\frac{(n+m)!}{m!(n-m)!} 
\frac{i^m}{(2t)^m}\frac{m}{t}| \leq
$$
\begin{equation}
\label{secondglobal}
\frac{1}{t}\sum_{m=1}^n \frac{n!}{m!(n-m)!} 
(n+1)...(n+m)\frac{n}{(2t)^m} = 
\frac{n}{t} (1+\frac{n}{t})^n,
\end{equation}
where we again followed the argument in Lemma 2.1. 

Combining \eqref{h'}, \eqref{firstglobal}, and \eqref{secondglobal} we yield
$$
|\partial_t h_n^{(1)}(t)| \leq
\frac{\sqrt{2}}{\sqrt{\pi}t}
(\frac{\sqrt{t^2+1}}{t} (1+\frac{n}{t})^n+
\frac{n}{t}(1+\frac{n}{t})^{n+1}),
$$
and complete the proof of \eqref{globalb'}.

\end{proof}

\section{Proof of main results}

For a ($L^2$-) function $u$ on the sphere $\partial B_R$ we have the orthonormal expansion
$$
u(x)= \sum_{n=0}^{\infty} u_n Y_n(\sigma;u)
$$
and introduce the low frequency projector
$$
P_N u(x)= \sum_{n=0}^{N} u_n Y_n(\sigma;u).
$$

\begin{lemma} 
Assume that $2\leq kR$.
 
Then we have the following stability estimate: 
\begin{equation}
\label{lowbound}
\lVert P_N u \rVert_{(0)}(\partial B_R)
\leq \frac{\sqrt{2}}{\pi} e \varepsilon_1. 
\end{equation}
\end{lemma}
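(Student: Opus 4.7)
The plan is direct substitution: use the representation $u_n(R)=ki\,a_n h_n^{(1)}(kR)$ from \eqref{expu} and control each Hankel factor with Lemma \ref{low}. Since $N=[\sqrt{kR}]$, every $n\le N$ satisfies $n^2\le N^2\le kR$, which is precisely the regime where Lemma \ref{low} applies; the hypothesis $2\le kR$ enters only to ensure $N\ge 1$ so that the truncation range is nonempty.

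First I would unfold the low-frequency norm. From the definition \eqref{norm} with $l=0$ and the orthonormality of the system $\{Y_n(\cdot;u)\}$,
$$
\|P_N u\|_{(0)}^2(\partial B_R)= R^2\sum_{n=0}^{N}|u_n(R)|^2 = R^2 k^2\sum_{n=0}^{N}|a_n|^2\,|h_n^{(1)}(kR)|^2.
$$
Second, I would apply Lemma \ref{low} with $t=kR$. The argument inside that proof really needs only $n^2\le t$ (the bounds $(n-m+1)\cdots n\le n^m$ and $(n+1)\cdots(n+m)\le(2n)^m$ combine to give $(2n^2)^m/(2t)^m\le 1$), so
$$
|h_n^{(1)}(kR)|\;<\;\frac{\sqrt{2}\,e}{\sqrt{\pi}\,kR}\qquad\text{for all }0\le n\le N.
$$

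Third, I would insert this pointwise bound into the sum and recognize $\varepsilon_1^2=\sum_{n=0}^{N}|a_n|^2$:
$$
\|P_N u\|_{(0)}^2(\partial B_R)\;\le\; R^2k^2\cdot\frac{2e^2}{\pi(kR)^2}\sum_{n=0}^{N}|a_n|^2\;=\;\frac{2e^2}{\pi}\,\varepsilon_1^2,
$$
and take square roots to obtain \eqref{lowbound}. There is no real obstacle here; the entire argument is a one-line application of Lemma \ref{low} to each term of the orthonormal expansion, and the calibration $N=[\sqrt{kR}]$ is exactly what turns the a priori sharp-for-small-arguments bound on $h_n^{(1)}$ into a Lipschitz-type control of the low-frequency projection by $\varepsilon_1$, which is the Lipschitz term appearing in Theorems \ref{Th1} and \ref{Th2}.
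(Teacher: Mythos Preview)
Your proof is correct and follows exactly the route the paper indicates (the paper's own proof is the one-line remark that the result follows from \eqref{lowb} and \eqref{norm}). Note only that taking the square root of $\frac{2e^2}{\pi}\varepsilon_1^2$ yields $\frac{\sqrt{2}e}{\sqrt{\pi}}\varepsilon_1$, not $\frac{\sqrt{2}e}{\pi}\varepsilon_1$ as printed in \eqref{lowbound}; this is evidently a misprint in the statement, and your squared bound $\frac{2e^2}{\pi}\varepsilon_1^2$ is precisely the form used in the proofs of Theorems~\ref{Th1} and~\ref{Th2}.
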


This result  follows from \eqref{lowb} and 
\eqref{norm}. It shows Lipschitz stability
of the low frequency part $P_N u$ from low frequency part of $A$. Since we choose
$N=[kR]$, this part well approximates $u$ when $k$ is large.  

Now we give a proof of Theorem 1.1

\begin{proof}
Using the representation \eqref{expu} we yield
$$
\|u\|^2_{(0)}(\partial B_R)= 
\int_{\partial B_R}|u|^2(x) d\Gamma(x)=
$$
$$
k^2\int_{\partial B_R}|\sum_{n=0}^{\infty} i^n  a_n 
h_n^{(1)}(kR)Y_n(\sigma)|^2 d\Gamma(x) =
$$
$$
k^2 R^2 \sum_{n=0}^{\infty} |a_n|^2 
|h_n^{(1)}(kR)|^2,
$$
due to orthonormality of the system $Y_n$ on the unit sphere.
We let $N_1=[\sqrt{E+k}]$ and consider two cases:
1) $N+1\leq N_1$ and 2) $N_1\leq N$.

In case 1) we split the last sum into three terms obtaining
$$
\|u\|^2_{(0)}(\partial B_R) =
k^2 R^2 ( \sum_{n=0}^{N} |a_n|^2 
|h_n^{(1)}(kR)|^2 + 
$$
\begin{equation}
\label{split}
\sum_{n=N+1}^{N_1} |a_n|^2 
|h_n^{(1)}(kR)|^2 +
\sum_{n=N_1+1}^{\infty} |a_n|^2 
|h_n^{(1)}(kR)|^2).
\end{equation}

We have
$$
(1+\frac{N_1}{kR})^{2N_1}\varepsilon_2^2=
e^{2N_1 log(1+\frac{N_1}{kR})-2E}\leq
$$
\begin{equation}
\label{N1}
e^{2\frac{N_1^2}{kR}-2E}\leq
e^{-E+\frac{2}{R}}= e^{\frac{2}{R}}\varepsilon_2,
\end{equation}
where we used that $log(1+x)<x$, when $0<x$.

Observe that
$$
k^2 R^2\sum_{n=N_1+1}^{\infty} |a_n|^2 |h_n^{(1)}(kR)|^2\leq
$$
\begin{equation}
\label{N1+1}
\frac{1}{(N_1+1)^2}k^2 R^2\sum_{n=N_1+1}^{\infty} n^2 |a_n|^2 
|h_n^{(1)}(kR)|^2 = R^2\frac{M_1^2}{(N_1+1)^2}
\end{equation}

Finally from \eqref{split} by using \eqref{lowb}, \eqref{globalb},\eqref{N1}, and \eqref{N1+1} we obtain
$$
\|u\|^2_{(0)}(\partial B_R) \leq
$$
$$
k^2R^2\frac{2e^2}{\pi (kR)^2} \varepsilon_1^2 +
k^2R^2 e^{\frac{2}{R}} \frac{2}{\pi k^2 R^2}\varepsilon_2 +
R^2\frac{M_1^2}{(N_1+1)^2}
$$
which gives \eqref{T1} because of the choice of $N_1$.

In case 2) instead of \eqref{split} we write
$$
\|u\|^2_{(0)}(\partial B_R) =
$$
\begin{equation}
\label{split1}
k^2 R^2 ( \sum_{n=0}^{N} |a_n|^2 
|h_n^{(1)}(kR)|^2 + 
\sum_{n=N+1}^{\infty} |a_n|^2 
|h_n^{(1)}(kR)|^2).
\end{equation}
As in \eqref{N1+1} we have
\begin{equation}
\label{N+1}
k^2\sum_{n=N+1}^{\infty} |a_n|^2 |h_n^{(1)}(kR)|^2\leq \frac{M_1^2}{(N+1)^2}
\end{equation}
Similarly to case 1), from \eqref{split1} by using \eqref{low} and \eqref{N+1} we yield
$$
\|u\|^2_{(0)}(\partial B_R) \leq
k^2R^2\frac{2e^2}{\pi (kR)^2} \varepsilon_1^2 + 
R^2\frac{M_1^2}{(N+1)^2} \leq
$$
$$
\frac{2e^2}{\pi} \varepsilon_1^2 + 
R^2\frac{M_1^2}{(N_1+1)^2} \leq \frac{2e^2}{\pi} \varepsilon_1^2 + 
R^2\frac{M_1^2}{E+k}
$$
because in case 2) $N_1\leq N$ and $E+k \leq(N_1+1)^2$.
So again \eqref{T1} follows. 

We consider the same two cases: 1) $N+1\leq N_1$ and
2) $N_1\leq N$.

In case 1), as in the previous proof, we have the equality \eqref{split}. Now we bound the 
second term on its right side in a different way:
$$
k^2 R^2\sum_{n=N+1}^{N_1} |a_n|^2 
|h_n^{(1)}(kR)|^2 =
\sum_{n=N+1}^{N_1} \frac{kR}{n}|a_n||h_n^{(1)}(kR)|
kRn|a_n||h_n^{(1)}(kR)| \leq
$$
\begin{equation}
\label{CS}
R(\sum_{n=N+1}^{N_1} \frac{k^2R^2}{n^2}|a_n|^2|h_n^{(1)}(kR)
|^2)^{\frac{1}{2}}
(\sum_{n=N+1}^{N_1} k^2n^2|a_n|^2|h_n^{(1)}|
(kR)^2)^{\frac{1}{2}}
\end{equation}
when we use the Cauchy-Schwarz inequality. Bounding the first term on the right side of \eqref{CS} via 
\eqref{globalb} and the second term from the definition of the Sobolev norm we yield
$$
k^2 R^2\sum_{n=N+1}^{N_1} |a_n|^2 |h_n^{(1)}(kR)|^2 \leq
$$
$$
\frac{\sqrt{2}}{\sqrt{\pi}(N+1)}((1+\frac{N_1}{kR})^{N_1} R (\sum_{n=N+1}^{N_1}
|a_n|^2)^{\frac{1}{2}} M_1 =
$$
\begin{equation}
\label{CS1}
\frac{\sqrt{2}R}{\sqrt{\pi}(N+1)}((1+\frac{N_1}{kR})^{N_1} R \varepsilon_2 M_1 \leq
\sqrt{\frac{2}{\pi}}R \frac{1}{\sqrt{kR}} e^{\frac{1}{R}} M_1 \varepsilon_2^{\frac{1}{2}},
\end{equation}
where we used \eqref{N1}.

From \eqref{split} by using  \eqref{CS1} we derive that
$$
\|u\|^2_{(0)}(\partial B_R) \leq
$$
$$
k^2R^2\frac{2e^2}{\pi (kR)^2} \varepsilon_1^2 +
\sqrt{\frac{2R}{\pi}} \frac{1}{\sqrt{k}} e^{\frac{1}{R}} M_1 \varepsilon_2^{\frac{1}{2}} +
R^2\frac{M_1^2}{(N_1+1)^2}
$$
which as above produces \eqref{T2}.

Case 2) is considered exactly as in Theorem 1.1. 

Now we choose $N_1=[E+k]-1$ and again consider two cases:1) $N+1\leq N_1$ and
2) $N_1\leq N$.

\end{proof}

Now we similarly prove Theorem 1.2.

\begin{proof}

Now we choose $N_1=[\sqrt{E+k}]-1$ and again consider two cases:1) $N+1\leq N_1$ and
2) $N_1\leq N$.

In case 1) similarly to \eqref{split}
$$
\|\partial_ru\|^2_{(0)}(\partial B_R) =
k^4 R^2 ( \sum_{n=0}^{N} |a_n|^2 
|\partial_th_n^{(1)}(kR)|^2 + 
$$
\begin{equation}
\label{splitder}
\sum_{n=N+1}^{N_1} |a_n|^2 
|\partial_t h_n^{(1)}(kR)|^2 +
\sum_{n=N_1+1}^{\infty} |a_n|^2 
|\partial_t h_n^{(1)}(kR)|^2).
\end{equation}
Using Lemma 2.4  and the obvious inequality $\frac{t^2+1}{t^2}\leq\frac{5}{4}$, provided $2\leq t$, we obtain
$$
|\partial_t h_n^{(1)}(kR)|^2 \leq
\frac{2}{\pi k^2 R^2} \frac{5}{4} (1+\frac{n}{kR})^{2(n+1)}\leq 
\frac{1}{ k^2 R^2} (1+\frac{n}{kR})^{2(n+1)}
$$
Similarly to \eqref{N1}
\begin{equation}
\label{N11}
(1+\frac{N_1}{kR})^{2(N_1+1)}\varepsilon_2^2 \leq
e^{2\frac{N_1(N_1+1)}{kR}-2E}\leq
e^{-E+\frac{2}{R}}= e^{\frac{2}{R}}\varepsilon_2,
\end{equation}
because $N_1=[\sqrt{E+k}]-1$.
As in \eqref{N1+1}
\begin{equation}
\label{N1+2}
k^4 R^2\sum_{n=N_1+1}^{\infty} |a_n|^2 |\partial_t h_n^{(1)}(kR)|^2\leq r^2\frac{M_2^2}{(N_1+1)^2}.
\end{equation}
Hence 
$$
\|\partial_ru\|^2_{(0)}(\partial B_R) \leq
\frac{2e^2}{\pi} \frac{\sqrt{5}+1}{2} k^2\varepsilon_1^2+
e^{\frac{2}{R}}k^2\varepsilon_2+
 R^2\frac{M_2^2}{(N_1+1)^2}
$$

Case 2) is considered exactly as in Theorem 1.1
by splitting into two terms instead if three
in \eqref{splitder}. 

\end{proof}

\section{Application to linearized inverse obstacle scattering}

We consider a solution $u_0$ of the simplest scattering problem
\begin{equation}
\label{uH}
\Delta u_0 + k^2 u_0=0 \;\mbox{in}\; 
{\mathbb R}^3 \setminus \bar D_0,
\end{equation}
with the Dirichlet boundary condition (soft obstacle)
\begin{equation}
\label{ub}
 u_0 = 1  \;\mbox{on}\; \partial D_0
\end{equation}
and with the Sommerfeld radiation condition \eqref{Som} for $u_0$. Here $D_0$ is a bounded
domain with $C^2$-boundary and with connected
complement of $\bar D$.

More important in applications is the hard obstacle problem where the Dirichlet boundary condition
\eqref{ub} is replaced with the Neumann condition
\begin{equation}
\label{ub1}
\partial_{\nu}u_1 = 1  \;\mbox{on}\; \partial D_0
\end{equation}
for the solution $u_1$ to the Helmholtz equation
\eqref{uH} with the radiation condition 
\eqref{Som}.

We will consider obstacle $D_0= B_R$ in 
${\mathbb R}^3$.
Then the scattering problem \eqref{uH}, \eqref{ub} has the explicit solution
\begin{equation}
\label{u0}
u_0(x)= \frac{R}{e^{ikR}} \frac{e^{ikr}}{r} .
\end{equation}
and the hard scattering problem has the solution
\begin{equation}
\label{u1}
u_1(x)= \frac{R^2}{(ikR-1)e^{ikR}}
 \frac{e^{ikr}}{r} .
\end{equation}

Observe that $u_0, u_1$ can be viewed as incident spherical waves.

Let $D =\{x: r<R+d(\sigma)\}$ where $d$ is a function on $\partial B_1$ with small norm in $C^2$.  It is known \cite{H} that the solution $u$ to the scattering problem \eqref{uH}, \eqref{ub} with $D_0$ replaced by $D$ is $u_0+v_0+...$ with the scattering amplitude $A_0+A(v_0)+...$ where $A_0$ is the scattering amplitude of $u_0$, $A(v_0)$ is the scattering amplitude of the solution $v_0$ to the following scattering problem
\begin{equation}
\label{vH}
\Delta v_0 + k^2 v_0=0 \;\mbox{in}\; 
{\mathbb R}^3 \setminus \bar D_0,
\end{equation}
\begin{equation}
\label{vb}
 v_0 = -d \partial_r u_0 \;\mbox{on}\; 
\partial D_0
\end{equation}
with the Sommerfeld condition \eqref{Som} for $v_0$. The term $...$ has the  norm bounded by $C\|d\|_0^2$.

The linearized hard obstacle problem is similarly
the following scattering problem
\begin{equation}
\label{v1H}
\Delta v_1 + k^2 v_1=0 \;\mbox{in}\; 
{\mathbb R}^3 \setminus \bar D_0,
\end{equation}
\begin{equation}
\label{v1b}
 \partial_r v_1 = k^2 u_1 d\;\mbox{on}\; 
\partial D_0
\end{equation}
with the Sommerfeld condition \eqref{Som} for $v_1$.

Unique solvalibity of the direct scattering problems in Sobolev and H\"older spaces 
is well known \cite{CK}, \cite{I}, \cite{T}.
For example, for any $d\in H^1(\partial D_0)$ there
is an unique radiating solution $v_0\in H^1(B_{\rho}\setminus \bar B_r$ (for any $\rho>R$) to the linearized direct scattering problem \eqref{vH}, \eqref{vb}.

The linearized inverse obstacle scattering problem is to find $D$ (or, equivalently, $d$) from $A(v_0)$ or $A(v_1)$.

\begin{corollary}

For a solution $d$ of the inverse soft obstacle problem we have
$$
\|d\|^2_{(0)}(\partial B_R)\leq
\frac{R^2}{k^2R^2+1}(\frac{2 e^2}{\pi} \varepsilon_1^2 
+ \frac{2}{\pi}e^{\frac{2}{R}} \varepsilon_2)+ 
 R^2\frac{\|d\|^2_{(1)}(\partial B_R)}{E+k}
 $$
 and
$$
\|d\|^2_{(0)}(\partial B_R)\leq
\frac{R^2}{k^2R^2+1}(\frac{2 e^2}{\pi}  \varepsilon_1^2 
+ \sqrt{\frac{2(k^2R^2+1)}{\pi k R}}e^{\frac{1}{R}} \varepsilon_2^{\frac{1}{2}})+ 
R^2\frac{\|d\|^2_{(1)}(\partial B_R)}{E+k}
 $$ 
 where $\varepsilon_1, \varepsilon_2, E$ are defined in Theorem 1.1 with $A$ is replaced by
 $A(v_0)$.
\end{corollary}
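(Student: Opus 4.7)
The plan is to reduce the corollary to a direct application of Theorem~\ref{Th1} to the radiating solution $v_0$ of the linearized soft scattering problem \eqref{vH}, \eqref{vb}, using the explicit form of $u_0$ on the background sphere $\partial B_R$ to relate $v_0|_{\partial B_R}$ to the perturbation $d$.

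First I would compute $\partial_r u_0$ on $\partial B_R$ directly from \eqref{u0}. Differentiating $u_0(x) = R e^{-ikR} e^{ikr}/r$ gives
$$
\partial_r u_0(x) = \frac{R}{e^{ikR}}\cdot \frac{(ikr-1)e^{ikr}}{r^2},
$$
so at $r=R$ one obtains $\partial_r u_0 = (ikR-1)/R$, which is a constant on $\partial B_R$. In particular $|\partial_r u_0|^2 = (k^2R^2+1)/R^2$ on $\partial D_0$. The boundary condition \eqref{vb} then reads $v_0 = c\, d$ on $\partial B_R$, where $c = -(ikR-1)/R$ is a scalar constant with $|c|^2 = (k^2R^2+1)/R^2$.

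Because $c$ is constant on the sphere, expanding both $d$ and $v_0|_{\partial B_R}$ in the spherical harmonic basis shows that the Fourier coefficients of $v_0$ are simply $c$ times those of $d$. The definition \eqref{norm} of the Sobolev norms then yields the clean identity
$$
\|v_0\|_{(l)}^2(\partial B_R) = |c|^2\,\|d\|_{(l)}^2(\partial B_R) = \frac{k^2R^2+1}{R^2}\,\|d\|_{(l)}^2(\partial B_R)
$$
for $l=0,1$. In particular $M_1^2 = \|v_0\|_{(1)}^2(\partial B_R) = \tfrac{k^2R^2+1}{R^2}\|d\|_{(1)}^2(\partial B_R)$.

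Finally, I would apply estimates \eqref{T1} and \eqref{T2} of Theorem~\ref{Th1} to $u=v_0$ (with $\varepsilon_1,\varepsilon_2,E$ computed from $A(v_0)$) and divide both sides by $|c|^2 = (k^2R^2+1)/R^2$. The first two terms on the right pick up the prefactor $R^2/(k^2R^2+1)$, while in the last term the factor $R^2 M_1^2/(E+k)$ becomes $R^2 \|d\|_{(1)}^2(\partial B_R)/(E+k)$ after the $|c|^2$ in $M_1^2$ cancels the $|c|^2$ on the left, yielding the stated bounds. The only genuinely non-routine step is the observation that $\partial_r u_0$ is a constant on $\partial B_R$ — this is what makes the passage between $\|v_0\|_{(l)}$ and $\|d\|_{(l)}$ a multiplicative identity rather than merely an inequality, and it is special to the case $D_0 = B_R$; for a non-spherical background obstacle an analogous reduction would require a considerably more delicate multiplier-type estimate.
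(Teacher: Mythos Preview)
Your proposal is correct and follows essentially the same route as the paper: compute $\partial_r u_0$ from \eqref{u0} to get the constant $(ikR-1)/R$ on $\partial B_R$, use \eqref{vb} to convert between $\|v_0\|_{(l)}$ and $\|d\|_{(l)}$ by the scalar factor $|c|^2=(k^2R^2+1)/R^2$, and then invoke Theorem~\ref{Th1}. The paper's proof is terser---it only records the $l=1$ identity and then says the corollary follows---whereas you spell out the division by $|c|^2$ and the cancellation in the $M_1^2$ term; your closing remark about why constancy of $\partial_r u_0$ on the sphere is the crucial (and background-specific) ingredient is a nice addition.
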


\begin{proof}

From \eqref{u0} by elementary calculations
$$
\partial_r u_0 (R\sigma) = \frac{R}{e^{ikR}}
\frac{ik e^{ikR}R -e^{ikR}}{R^2} = \frac{ikR-1}{R}
$$
Hence $|\partial_r u_0|^2=\frac{k^2R^2+1}{R^2}$ and
$$
\|v_0\|^2_{(1)}(\partial B_R) =
\frac{k^2R^2+1}{R^2}\|d\|^2_{(1)}(\partial B_R).
$$
So this Corollary follows from Theorem 1.1.

\end{proof}

\begin{corollary}

For a solution $d$ of the inverse hard obstacle problem we have
$$
\|d\|^2_{(0)}(\partial B_R)\leq
$$
$$\frac{k^2R^2+1}{k^2 R^2}
(\frac{e^2}{\pi}(3+\sqrt{5}) \varepsilon_1^2 
+   e^{\frac{2}{R}}\varepsilon_2+ 
R^2\frac{\|d\|_{(1)}^2(\partial B_R)}{E+k-2\sqrt{E+k}+1} ,
 $$
 where $\varepsilon_1, \varepsilon_2, E$ are defined in Theorem 1.2 with $A$ is replaced by
 $A(v_1)$.
\end{corollary}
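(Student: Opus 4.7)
The plan is to apply Theorem \ref{Th2} to the radiating solution $v_1$ and then translate the resulting bound on $\|\partial_r v_1\|_{(0)}(\partial B_R)$ into a bound on $\|d\|_{(0)}(\partial B_R)$. This parallels the proof of Corollary 4.1, but because the hard-obstacle boundary condition involves $\partial_r v_1$ rather than $v_1$ itself, we use Theorem \ref{Th2} in place of Theorem \ref{Th1}.

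First I would evaluate $u_1$ on the sphere $\partial B_R$. Substituting $r = R$ in the explicit formula \eqref{u1} gives $u_1(R\sigma) = R/(ikR-1)$, a constant independent of $\sigma$, of squared modulus $R^2/(k^2R^2+1)$. Consequently the linearized Neumann condition \eqref{v1b} on $\partial B_R$ reads
$$
\partial_r v_1(R\sigma) = \frac{k^2 R}{ikR-1}\,d(\sigma),
$$
so the spherical harmonic coefficients of $\partial_r v_1$ and of $d$ differ only by this constant factor. The definition \eqref{norm} of the Sobolev norm then yields, for $l=0,1$,
$$
\|\partial_r v_1\|^2_{(l)}(\partial B_R) = \frac{k^4 R^2}{k^2R^2+1}\,\|d\|^2_{(l)}(\partial B_R).
$$

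Second, I would apply Theorem \ref{Th2} to $v_1$, taking $A$ to be $A(v_1)$ and $M_2 = \|\partial_r v_1\|_{(1)}(\partial B_R)$. Substituting the identity above for $\|\partial_r v_1\|^2_{(0)}$ on the left-hand side and for $M_2^2$ on the right-hand side, and then dividing through by the common factor $k^4R^2/(k^2R^2+1)$, produces the claimed bound on $\|d\|^2_{(0)}(\partial B_R)$.

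There is no substantial obstacle here: the argument is essentially mechanical once one observes that the unperturbed incident wave $u_1$ is constant on $\partial B_R$, which is what permits the exact proportionality between the Sobolev norms of $\partial_r v_1$ and of $d$. The only bookkeeping is to carry the factor $k^4R^2/(k^2R^2+1)$ cleanly through the three terms on the right-hand side of the estimate from Theorem \ref{Th2}, so that the final constants in front of $\varepsilon_1^2$, $\varepsilon_2$, and $\|d\|^2_{(1)}(\partial B_R)$ match those in the statement of the corollary.
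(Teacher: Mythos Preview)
Your proposal is correct and follows exactly the same approach as the paper: compute $u_1$ on $\partial B_R$ to obtain the proportionality $\|\partial_r v_1\|^2_{(l)}(\partial B_R)=\dfrac{k^4R^2}{k^2R^2+1}\,\|d\|^2_{(l)}(\partial B_R)$, apply Theorem~\ref{Th2} to $v_1$, and divide through. One minor bookkeeping remark: after the division the third term emerges simply as $R^2\dfrac{\|d\|^2_{(1)}(\partial B_R)}{E+k-2\sqrt{E+k}+1}$ without the prefactor $(k^2R^2+1)/(k^2R^2)$, which is exactly what the paper's proof records; the displayed statement of the corollary has an unmatched parenthesis, but since $(k^2R^2+1)/(k^2R^2)\ge 1$ either reading is valid.
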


\begin{proof}

Observe that, according to \eqref{u1}, \eqref{v1b},
$$
\|\partial_r v_1\|^2_{(1)}(\partial B_R)=
k^4\frac{R^2}{k^2R^2+1}\|d\|^2_{(1)}(\partial B_R).
$$
Now from \eqref{v1b}, \eqref{u1} and Theorem 1.2 we have
$$
\| d\|_{(0)}^2(\partial B_R)\leq
$$
$$
\frac{k^2R^2+1}{k^2R^2}(
\frac{e^2}{\pi}(3+\sqrt{5}) \varepsilon_1^2 
+  e^{\frac{2}{R}}\varepsilon_2)+ 
R^2\frac{\|d\|^2_{(1)}(\partial B_R)}{E+k-2\sqrt{E+k}+1} 
$$
and Corollary 4.2 follows.

\end{proof}

\section{Conclusion}

We think that increasing stability is an important feature of the  which leads to higher resolution 
of numerical algorithms. It is important to collect numerical evidence of this phenomenon. We tried to obtain most explicit forms of stability estimates to make them useful in particular for numerical solution of inverse scattering problems.

It is important to expand Lipschitz stability zone:
i.e. to replace the condition $n^2<t$  of Lemma 2.1 by the most natural condition $ n <\theta k$   with some $\theta<1$. Given numerous previous efforts, this seems to be a hard problem.  The results of this paper most likely imply similar increasing stability estimates when $B(0,R)$ is replaced by a strictly convex domain $D$. Indeed, one can represent the complement of such $D$ as the union
of the family of the exteriors of spheres whose radii and centers are contained in a bounded set
and use bounds \eqref{T1}, \eqref{T2} for these spheres. For general convex obstacles we do not expect such explicit and simple bounds.

Much more challenging is to show increasing stability for soft and hard (convex) obstacles.  
It is not clear even how to handle linearized 
problems near a sphere when the incident wave is traditional $e^{ik\xi\cdot x}$ with $|\xi|=1$. While the solution $u_0$ of the unperturbed soft scattering problem is well known \cite{CK}, \cite{T}, its is difficult to control zeros of its normal derivative on the boundary and hence to use \eqref{vb}.

\bibliographystyle{amsalpha}

 \end{document}